\newtheorem{theorem}{Theorem}
\theoremstyle{remark}
\begin{document}

\setcounter{page}{1}

\title[]{A bivariate generating function for zeta values \\
and related supercongruences
}

\author{Roberto~Tauraso}

\address{Dipartimento di Matematica, 
Universit\`a di Roma ``Tor Vergata'', 
via della Ricerca Scientifica, 
00133 Roma, Italy}
\email{tauraso@mat.uniroma2.it}

\subjclass[2010]{11A07, 05A19, 11B65, 11M06.}

\keywords{congruences, central binomial coefficients, harmonic numbers,
Wilf-Zeilberger method, zeta values, Ap\'ery-like series}

\date{\today}

\begin{abstract} 
By using the Wilf-Zeilberger method, we prove a novel finite combinatorial identity related to a bivariate generating function for $\zeta(2+r+2s)$ (an extension of a Bailey-Borwein-Bradley Ap\'ery-like formula for even zeta values). Such identity is then applied to show several supercongruences.
\end{abstract}

\maketitle

\section{Introduction} 
The  bivariate formula 
\begin{equation}\label{GF1}
\sum_{k=1}^{\infty}\frac{k}{k^4-a^2k^2-b^4}
=\frac{1}{2}\sum_{k=1}^{\infty}\frac{(-1)^{k-1}(5k^2-a^2)}{k\binom{2k}{k}}
\cdot\frac{\prod_{j=1}^{k-1}((j^2-a^2)^2+4b^4)}{\prod_{j=1}^{k} (j^4-a^2j^2-b^4)}
\end{equation}
has been first conjectured by H. Cohen and then proved independently by T. Rivoal \cite[Theorem 1.1]{Ri04} and  D. M. Bradley \cite[Theorem 1]{Br08} by reducing it to the finite combinatorial identity 
\begin{equation}\label{ID1}
\sum_{k=1}^n\binom{2k}{k}
\frac{(5k^2-a^2)\prod_{j=1}^{k-1}((n^2-j^2)(n^2+j^2-a^2))}{
\prod_{j=1}^{k} (n^2+(n-j)^2-a^2)(n^2+(n+j)^2-a^2)}=\frac{2}{n^2-a^2}
\end{equation}
and by Kh. and T. Hessami Pilehrood \cite[Theorem 1]{PP08} by applying the Wilf-Zeilberger theory. Since the left-hand side of \eqref{GF1} can be written as the generating function of $\zeta(3+2r+4s)$,
$$\sum_{r=0}^{\infty}\sum_{s=0}^{\infty}\binom{r+s}{r}\zeta(3+2r+4s)a^{2r}b^{4s},$$
it follows that, by extracting the coefficients  for $(r,s)=(0,0)$
and $(r,s)=(1,0)$, we obtain the  A\'pery-like identities
\begin{equation}\label{S12}
\zeta(3)=\frac{5}{2}\sum_{k=1}^{\infty}\frac{(-1)^{k-1}}{k^3\binom{2k}{k}}
\quad\mbox{and}\quad
\zeta(5)=\frac{1}{2}\sum_{k=1}^{\infty}\frac{(-1)^{k-1}}{\binom{2k}{k}}
\left(\frac{4}{k^5}-\frac{5H_{k-1}(2)}{k^3}\right)
\end{equation}
where $H_n(s)=\sum_{j=1}^n\frac{1}{j^s}$ is the {\sl harmonic sum} of weight $s$. For more details about Ap\'ery-like series see also \cite{AG99,BB97,PP08,Ch17}.

Here we consider a similar bivariate formula 
\begin{equation}\label{GF2}
\sum_{k=1}^{\infty}\frac{1}{k^2-ak-b^2}
=\sum_{k=1}^{\infty}\frac{(3k-a)}{k\binom{2k}{k}}
\cdot\frac{\prod_{j=1}^{k-1}(j^2-a^2-4b^2)}{\prod_{j=1}^{k} (j^2-aj-b^2)}
\end{equation}
where the left-hand side is the generating function of $\zeta(2+r+2s)$,
$$\sum_{r=0}^{\infty}\sum_{s=0}^{\infty}
\binom{r+s}{r}\zeta(2+r+2s)a^rb^{2s}.$$
For $a=0$, \eqref{GF2} yields a formula due to D. H. Bailey, J. M. Borwein, and D. M. Bradley, 
$$\sum_{s=0}^{\infty}
\zeta(2+2s)b^{2s}=\sum_{k=1}^{\infty}\frac{1}{k^2-b^2}
=3\sum_{k=1}^{\infty}\frac{1}{\binom{2k}{k}}
\cdot\frac{\prod_{j=1}^{k-1}(j^2-4b^2)}{\prod_{j=1}^{k} (j^2-b^2)}$$
which appeared in \cite[Theorem 1.1]{BBB06}. 
Moreover, for $(r,s)=(1,0)$ and $(r,s)=(0,1)$, we get the A\'pery-like identities
\begin{align}\label{S34}
\zeta(3)=\sum_{k=1}^{\infty}\frac{1}{\binom{2k}{k}}
\left(\frac{2}{k^3}+\frac{3H_{k-1}(1)}{k^2}\right)\quad\mbox{and}\quad
\zeta(4)=3\sum_{k=1}^{\infty}\frac{1}{\binom{2k}{k}}
\left(\frac{1}{k^4}-\frac{3H_{k-1}(2)}{k^2}\right).
\end{align}
Replacing $a$ by $2a$ and then letting $x^2 = a^2 + b^2$
in \eqref{GF2} we find the equivalent identity
$$\sum_{k=1}^{\infty}\frac{1}{(k-a)^2-x^2}
=\sum_{k=1}^{\infty}\frac{(3k-2a)}{k\binom{2k}{k}}
\cdot\frac{\prod_{j=1}^{k-1}(j^2-4x^2)}{\prod_{j=1}^{k} ((j-a)-x^2)}$$
which has been proved by Kh. and T. Hessami Pilehrood \cite[(24)]{PP12a}.

Again, in the same spirit of what has been done for \eqref{GF1}, our proof of the identity \eqref{GF2} is reduced to show the following novel finite identity
\begin{equation}\label{ID2}
\sum_{k=1}^n
\binom{2k}{k}\frac{3k-2n+a}{k^2-a^2}\cdot\prod_{j=1}^{k-1}
\frac{(j-n)(j-n+a)}{j^2-a^2}=\frac{2}{n-a}.
\end{equation}
In \cite[Theorem 4.2]{Ta10} the author established that for any prime $p>5$,
\begin{align}\label{CO1}
&\sum_{k=1}^{p-1}\frac{1}{k}\binom{2k}{k}
\equiv -\frac{8H_{p-1}(1)}{3}\pmod{p^4},\\
\label{CO2}
&\sum_{k=1}^{p-1}\frac{(-1)^{k}}{k^2}\binom{2k}{k}
\equiv \frac{4}{5}\left(\frac{H_{p-1}(1)}{p}+2pH_{p-1}(3)\right)\pmod{p^4}.
\end{align}
Thanks to the finite identities \eqref{ID1} and \eqref{ID2}, we managed to improve congruence \eqref{CO1} and to show several other congruences. The main results are as follows: for any prime $p>5$,
\begin{align}
\label{CO4b}
&\sum_{k=1}^{p-1}\frac{1}{k^3}\binom{2k}{k}\equiv 
-\frac{2H_{p-1}(1)}{p^2}\pmod{p^2},\\
\label{CO4c}
&\sum_{k=1}^{p-1}\binom{2k}{k}\frac{H_k(2)}{k}
\equiv \frac{2H_{p-1}(1)}{3p^2}\pmod{p^2}.
\end{align}
These congruences are known modulo $p$ (see \cite[Theorem 2]{PP12}) and they confirm modulo $p^2$ the following conjecture by Z.-W. Sun 
: for each prime $p>7$,
\begin{align*}
&\sum_{k=1}^{p-1}\frac{1}{k^3}\binom{2k}{k}\equiv 
-\frac{2H_{p-1}(1)}{p^2}-\frac{13H_{p-1}(3)}{27}\pmod{p^4},\\
&\sum_{k=1}^{p-1}\binom{2k}{k}\frac{H_k(2)}{k}
\equiv \frac{2H_{p-1}(1)}{3p^2}-\frac{38H_{p-1}(3)}{81}\pmod{p^3}.
\end{align*}
the first one appeared in \cite[Conjecture 1.1]{SunZW11} and the second one in \cite[Conjecture 5.1]{SunZW15}.

\section{Preliminaries concerning multiple harmonic sums}

We define the {\sl multiple harmonic sum} as
$$H_n(s_1,\dots,s_r)=\sum_{1\leq k_1<k_2<\cdots<k_r\leq n}\frac{1}{k_1^{s_1} k_2^{s_2}\cdots k_r^{s_r}}$$
where $n\geq r>0$ and each $s_i$ is a positive integer. The sum $s_1+s_2+\dots+s_r$ is the  weight of the multiple sum. Furthermore, by $\{s_1, s_2, \dots, s_j\}^m$ we denote the
sequence of length $mj$ with $m$ repetitions of $(s_1, s_2,\dots, s_j)$.

\noindent By \cite[Theorem 5.1]{SunZH00}), for any prime $p>s+2$ we have
\begin{align*}
H_{p-1}(s)\equiv
\begin{cases}
\displaystyle -\frac{s(s+1)}{2(s+2)}\,p^2\,B_{p-s-2}  \pmod{p^3} &\mbox{if $s$ is odd,}\vspace{1mm}\\
\displaystyle \frac{s}{s+1}\,p\,B_{p-s-1}  \pmod{p^2} &\mbox{if $s$ is even.}
\end{cases}
\end{align*}
where $B_n$ be the $n$-th Bernoulli number.

\noindent Let $p>5$ be a prime,  then by \cite[Theorem 2.1]{Ta10},
\begin{equation}\label{ta}
 H_{p-1}(2)\equiv  - \frac{2H_{p-1}(1)}{p}-\frac{pH_{p-1}(3)}{3}\pmod{p^4}.
 \end{equation}
Moreover, by \cite[Lemma 3]{PP12},
$$H_{p-1}(1,2)\equiv  -\frac{3H_{p-1}(1)}{p^2}-\frac{5H_{p-1}(3)}{12}\pmod{p^3}$$
and by \cite[Proposition 3.7]{Za08} and  \cite[Theorem 4.5]{PPT14}
$$H_{p-1}(1,1,2)\equiv -\frac{11H_{p-1}(3)}{12p}\pmod{p^2}\;,\;
H_{p-1}(1,1,1,2)\equiv -\frac{5H_{p-1}(3)}{6p^2}\pmod{p}.$$
Finally, by \cite[Theorem 3.2]{Za08},
$$H_{p-1}(2,2)\equiv \frac{H_{p-1}(3)}{3p}\;,\;
H_{p-1}(1,3)\equiv \frac{3H_{p-1}(3)}{4p} \pmod{p^2}$$
and by \cite[Theorem 3.5]{Za08}, 
$$H_{p-1}(2,1,2)\equiv 0\;,\;
H_{p-1}(1,2,2)\equiv \frac{5H_{p-1}(3)}{4p^2}\;,\;
H_{p-1}(1,1,3)\equiv -\frac{5H_{p-1}(3)}{12p^2} \pmod{p}.$$

\section{Proofs of the generating function \eqref{GF2} and the related combinatorial identity \eqref{ID2}}

By partial fraction decomposition with respect to $b^2$, we get
$$\frac{\prod_{j=1}^{k-1}(j^2-a^2-4b^2)}{\prod_{j=1}^{k} (j^2-aj-b^2)}=
\sum_{n=1}^k\frac{C_{n,k}(a)}{n^2-an-b^2}$$
where
$$C_{n,k}(a)=\frac{\prod_{j=1}^{k-1} (j^2-(a-2n)^2)}{\prod_{j=1,j\not=n}^{k} (j-n)(j+n-a)}.$$
Hence, by inverting the summations order, the identity \eqref{GF2} can be written as
$$\sum_{n=1}^{\infty}\frac{1}{n^2-an-b^2}
=\sum_{k=1}^{\infty}\frac{(3k-a)}{k\binom{2k}{k}}
\sum_{n=1}^k\frac{C_{n,k}(a)}{n^2-an-b^2}
=\sum_{n=1}^{\infty}\frac{1}{n^2-an-b^2}
\sum_{k=n}^{\infty}\frac{(3k-a)C_{n,k}(a)}{k\binom{2k}{k}}.$$
It follows that \eqref{GF2} holds as soon as
\begin{equation}\label{x1}
1=\sum_{k=n}^{\infty}\frac{(3k-a)C_{n,k}(a)}{k\binom{2k}{k}}=
\sum_{k=n}^{\infty}\frac{(3k-a)}{k\binom{2k}{k}}\cdot\frac{\prod_{j=1}^{k-1} (j^2-(a-2n)^2)}{\prod_{j=1,j\not=n}^{k} (j-n)(j+n-a)}.
\end{equation}
Taking the same approach given in \cite{Ri04} for the proof of \eqref{GF1}, the above formula is equivalent to this finite combinatorial identity 
\begin{equation}\label{x2}
\sum_{k=1}^n
\binom{2k}{k}(3k-a)\,
\frac{\prod_{j=1}^{k-1}(j-n)(j+n-a)}{\prod_{j=1}^{k}(j^2-(a-2n)^2)}=\frac{2}{a-n}.
\end{equation}
Both identities \eqref{x1} and \eqref{x2} are consequences of the next theorem after setting $z=2n-a$.
\begin{theorem} For any positive integer $n$,
\begin{equation}\label{y2}
\sum_{k=1}^n
\binom{2k}{k}(3k-2n+z)\,
\frac{\prod_{j=1}^{k-1}(j-n)(j-n+z)}{\prod_{j=1}^{k} (j^2-z^2)}=\frac{2}{n-z},
\end{equation}
and
\begin{equation}\label{y1}
\sum_{k=n}^{\infty}\frac{(3k-2n+z)}{k\binom{2k}{k}}\cdot\frac{\prod_{j=1}^{k-1} (j^2-z^2)}{\prod_{j=1,j\not=n}^{k} (j-n)(j-n+z)}=1.
\end{equation}
\end{theorem}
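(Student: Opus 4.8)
The plan is to prove the finite identity \eqref{y2} first, since \eqref{y1} follows from it by a standard argument (of the kind used in \cite{Ri04,Br08}): one shows that both identities encode the same formal partial-fraction statement, so that \eqref{y2}, holding for every positive integer $n$, forces the coefficient identity \eqref{y1}. Thus the heart of the matter is the polynomial identity in $z$ (for each fixed $n$)
\begin{equation*}
S_n(z):=\sum_{k=1}^n\binom{2k}{k}(3k-2n+z)\,\frac{\prod_{j=1}^{k-1}(j-n)(j-n+z)}{\prod_{j=1}^{k}(j^2-z^2)}=\frac{2}{n-z}.
\end{equation*}
The natural tool, flagged already in the abstract and the surrounding text, is the Wilf--Zeilberger method. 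So the first step I would take is to fix $n$ and regard the summand
\begin{equation*}
F(n,k)=\binom{2k}{k}(3k-2n+z)\,\frac{\prod_{j=1}^{k-1}(j-n)(j-n+z)}{\prod_{j=1}^{k}(j^2-z^2)}
\end{equation*}
as a hypergeometric term in $k$ (with $n$, $z$ as parameters), and run Zeilberger's algorithm — or rather, since we want an identity valid for all $n$ simultaneously, look for a WZ pair: a companion term $G(n,k)$ with $F(n+1,k)-F(n,k)=G(n,k+1)-G(n,k)$. Concretely I expect $G(n,k)=R(n,k,z)\,F(n,k)$ for an explicit rational certificate $R$, which one guesses/derives by the usual Gosper step applied to $F(n+1,k)-F(n,k)$.

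Once such a certificate is in hand, the proof is a telescoping argument. Summing $F(n+1,k)-F(n,k)=G(n,k+1)-G(n,k)$ over $k$ from $1$ to $n+1$ (being careful that $F(n,n+1)$ and the analogous boundary terms vanish because of the factor $(j-n)$ with $j=n$ appearing in the product for $k\ge n+1$), one gets $S_{n+1}(z)-S_n(z)=G(n,n+2)-G(n,1)$ up to boundary corrections, and the right-hand side should collapse to $\frac{2}{(n+1)-z}-\frac{2}{n-z}$. Combined with the base case $n=1$, where the sum has the single term $k=1$, namely $\binom{2}{1}(3-2+z)/(1-z^2)=2(1+z)/((1-z)(1+z))=2/(1-z)$, matching $\frac{2}{1-z}$, induction on $n$ closes the argument. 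I would present the certificate explicitly and then verify the WZ relation as a routine (if tedious) rational-function identity, which is the kind of computation the excerpt explicitly says it will not grind through in detail.

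The main obstacle I anticipate is twofold. First, getting the boundary bookkeeping exactly right: the products $\prod_{j=1}^{k-1}(j-n)(j-n+z)$ vanish once $k-1\ge n$, which is what truncates the infinite sum to $k\le n$, but when one shifts $n\mapsto n+1$ the range changes, so one must check carefully that the telescoped endpoint contributions $G(n,1)$ and $G(n,n+2)$ (and any term where a zero in the numerator meets a zero in the denominator $j^2-z^2$, i.e. potential cancellation issues at $j=n$ versus $j=\pm z$) are handled correctly — treating $z$ as a formal variable/indeterminate avoids division-by-zero subtleties. Second, actually producing the certificate $R(n,k,z)$: while Zeilberger's algorithm guarantees one exists, writing it down in a clean closed form and simplifying the resulting identity so that the endpoint evaluations visibly telescope to $\frac{2}{n-z}$ is where the real work lies. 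Everything after that — deducing \eqref{y1} from \eqref{y2}, and hence \eqref{GF2} and \eqref{ID2} — is formal manipulation already sketched in the surrounding text.
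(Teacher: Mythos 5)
Your treatment of \eqref{y2} is essentially the paper's own proof: the paper also fixes the summand as a hypergeometric term in $k$, exhibits a WZ certificate, and telescopes in $n$ with the base case $n=1$. (Its certificate, after normalizing the summand by the extra factor $n(n-z)$, i.e.\ taking the product from $j=0$, is $G(n,k)=\frac{k(k^2-z^2)F(n,k)}{(2n-3k-z)(n+1-k)(n+1-k-z)}$, which makes the partial sums equal $2n$ and the telescoped difference the constant $2$.) So, apart from not actually producing the certificate, your plan for \eqref{y2} is sound and your base-case computation is correct.

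The genuine gap is \eqref{y1}. You dismiss it as following from \eqref{y2} ``by a standard argument'' on the grounds that ``both identities encode the same formal partial-fraction statement''; that is not what the partial-fraction step does. The decomposition in the paper relates the generating function \eqref{GF2} to the family of tail identities \eqref{y1} (one for each $n$); it does not relate \eqref{y1} to \eqref{y2}. Passing between the nonterminating identity \eqref{y1} and the terminating identity \eqref{y2} is a substantive step: in Rivoal's and Bradley's work on \eqref{GF1} it rests on a nontrivial transformation connecting a nonterminating well-poised hypergeometric series to a terminating one, not on uniqueness of partial-fraction coefficients, and the present paper avoids that route entirely by proving \eqref{y1} directly with a second, independent WZ pair, namely $F(n,k)=\frac{3k-2n+z}{k\binom{2k}{k}}\cdot\frac{\prod_{j=1}^{k-1}(j^2-z^2)}{\prod_{j=1,j\not=n}^{k}(j-n)(j-n+z)}$ with certificate $G(n,k)=\frac{2(2k-1)(k-n)F(n,k)}{n(2n-3k-z)(n-z)}$, telescoping in $n$ together with the limiting/base evaluation that an infinite sum requires. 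As written, your proposal therefore proves only half of the theorem: to complete it you must either reproduce the Rivoal--Bradley transformation argument in this setting or give a direct proof of \eqref{y1} (e.g.\ by such a second WZ pair), including the convergence and boundary analysis for the infinite series.
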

\begin{proof}
Let
$$F(n,k)=\binom{2k}{k}(3k-2n+z)
\frac{\prod_{j=0}^{k-1}(j-n)(j-n+z)}{\prod_{j=1}^{k}(j^2-z^2)},
$$
and
$$G(n,k)=\frac{k(k^2-z^2)F(n,k)}{(2n-3k-z)(n+1-k)(n+1-k-z)}.$$
Then $(F,G)$ is a Wilf-Zeilberger pair, or WZ pair, which means that they satisfy the relation
$$F(n+1,k)-F(n,k)=G(n,k+1)-G(n,k).$$
In order to prove \eqref{y2}, it suffices to prove that $S_n:=\sum_{k=1}^{n}F(n,k)=2n$.  Now
$S_1=F(1,1)=2$.
Morever, 
$$S_{n+1}-S_n=\sum_{k=1}^{n+1}F(n+1,k)-\sum_{k=1}^{n+1}F(n,k)=G(n,n+2)-G(n,1)=2$$
because $F(n,n+1)=G(n,n+2)=0$ and $G(n,1)=-2$.

In a similar way, we show \eqref{y1} by considering the WZ pair  given by
$$F(n,k)=\frac{(3k-2n+z)}{k\binom{2k}{k}}\cdot\frac{\prod_{j=1}^{k-1} (j^2-z^2)}{\prod_{j=1,j\not=n}^{k} (j-n)(j-n+z)},
$$
and
$$G(n,k)=\frac{2(2k-1)(k-n)F(n,k)}{n(2n-3k-z)(n-z)}.$$
\end{proof}

\section{More binomial identities}

Here we collect a few identities, apparently new, involving the binomial coefficients $\binom{2k}{k}$ and $\binom{n+k}{k}$ which will play a crucial role in the next sections. 

\begin{theorem} For any positive integer $n$,
\begin{align}\label{Id0}
&\frac{3}{2}\sum_{k=1}^{n} \frac{1}{k}\binom{2k}{k}=
\sum_{k=1}^{n}\frac{1}{k}\binom{n+k}{k}+H_{n}(1)\\
\label{Id1}
&\sum_{k=1}^n\binom{2k}{k}\left(\frac{3H_{k}(1)}{2k}-\frac{1}{k^2}\right)
=\sum_{k=1}^{n}\binom{n+k}{k}\frac{H_k(1)}{k}-H_n(2)\\
\label{Id2}
&\sum_{k=1}^n\binom{2k}{k}\left(\frac{3H_{k}(2)}{k}-\frac{1}{2k^3}\right)
=\sum_{k=1}^{n}\binom{n+k}{k}\frac{H_k(2)+H_n(2)}{k}
+H_n(2)H_n(1)-H_n(1,2)
\end{align}
\end{theorem}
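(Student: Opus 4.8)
The plan is to prove the three identities \eqref{Id0}, \eqref{Id1}, \eqref{Id2} in parallel, treating each as an identity between two sequences in $n$ that agree at $n=1$ and satisfy the same first-order recursion; equivalently, show that the difference of the two sides is constant in $n$ (in fact zero, as one checks at $n=1$). Concretely, for \eqref{Id0} set $L_n$ and $R_n$ to be the left and right sides; I would compute $L_{n+1}-L_n$ and $R_{n+1}-R_n$ separately and show they coincide. On the left, $L_{n+1}-L_n=\tfrac32\cdot\tfrac1{n+1}\binom{2n+2}{n+1}$. On the right, the harmonic part contributes $\tfrac1{n+1}$, and for the binomial part one uses $\binom{n+1+k}{k}-\binom{n+k}{k}=\binom{n+k}{k-1}$ together with the boundary term $k=n+1$, so that $\sum_{k=1}^{n+1}\tfrac1k\binom{n+1+k}{k}-\sum_{k=1}^{n}\tfrac1k\binom{n+k}{k}=\sum_{k=1}^{n}\tfrac1k\binom{n+k}{k-1}+\tfrac1{n+1}\binom{2n+2}{n+1}$. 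Reindexing $\sum_{k=1}^{n}\tfrac1k\binom{n+k}{k-1}=\sum_{j=0}^{n-1}\tfrac1{j+1}\binom{n+1+j}{j}$ and recognizing the telescoping/absorption identity $\tfrac1{k}\binom{n+k}{k-1}=\tfrac1{n}\bigl(\binom{n+k}{k}-\binom{n+k-1}{k}\bigr)\cdot(\text{something})$ — more cleanly, using $\binom{n+k}{k-1}=\tfrac{k}{n+1}\binom{n+k}{k}$ — collapses the right-hand increment to exactly $\tfrac32\cdot\tfrac1{n+1}\binom{2n+2}{n+1}$. The same scheme handles \eqref{Id1} and \eqref{Id2}, now with $H_k(1)$ or $H_k(2)$ weights and the extra terms $H_n(2)$, $H_n(2)H_n(1)-H_n(1,2)$ on the right.

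Alternatively, and perhaps more robustly, I would feed each identity to the Wilf--Zeilberger machinery, mirroring the proof of the Theorem before it. For \eqref{Id0}, write the claim as $\sum_{k}\bigl(\tfrac32\tfrac1k\binom{2k}{k}-\tfrac1k\binom{n+k}{k}\bigr)=H_n(1)$ and seek a WZ certificate: a rational function $R(n,k)$ such that with $F(n,k)=\tfrac32\tfrac1k\binom{2k}{k}-\tfrac1k\binom{n+k}{k}$ (or better, split into the two pieces, since the $\binom{2k}{k}$ piece is $n$-free and only the $\binom{n+k}{k}$ piece needs a certificate) one has $F(n+1,k)-F(n,k)=G(n,k+1)-G(n,k)$ with $G(n,k)=R(n,k)F(n,k)$. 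Summing over $k$ from $1$ to $n$ (or $n+1$), the right side telescopes to a boundary term, and one checks the base case $n=1$ by hand. This is purely mechanical once the certificate is found, and the certificate is exactly the kind of object the paper already uses. For \eqref{Id1} and \eqref{Id2} the summand additionally carries harmonic factors $H_k(1)$, $H_k(2)$, $H_n(2)$; these are handled by the standard device of differentiating a parametric binomial identity (i.e., working with $\binom{n+k}{k}$ as $\Gamma$-quotients and using $\tfrac{d}{d\epsilon}$ at $\epsilon=0$ of a shifted identity), or equivalently by running WZ on an auxiliary bivariate sum and then specializing.

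The main obstacle will be the bookkeeping in \eqref{Id2}: the right-hand side mixes $\binom{n+k}{k}\tfrac{H_k(2)+H_n(2)}{k}$ with the genuinely ``double'' correction $H_n(2)H_n(1)-H_n(1,2)$, and matching the $n\mapsto n+1$ increment of this against the left side requires carefully tracking how $H_{n+1}(2)=H_n(2)+\tfrac1{(n+1)^2}$ and $H_{n+1}(1,2)=H_n(1,2)+\tfrac{H_n(1)}{(n+1)^2}$ (with the appropriate convention) interact with the reindexing $\binom{n+1+k}{k}-\binom{n+k}{k}=\binom{n+k}{k-1}$ and with the extra $H_k(2)$ weight inside the sum. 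I expect several terms of the form $\sum_k \tfrac1k\binom{n+k}{k-1}H_k(2)$ and $\tfrac{1}{(n+1)^2}\sum_k\tfrac1k\binom{n+k}{k}$ to appear, which must be simplified using absorption identities like $\tfrac1k\binom{n+k}{k-1}=\tfrac{1}{n+1}\binom{n+k}{k}$ and summation by parts on the harmonic weight; getting all of these to cancel against the left-hand increment $\sum_{k=1}^{n+1}\binom{2k}{k}\bigl(\tfrac{3H_k(2)}{k}-\tfrac1{2k^3}\bigr)$-difference is where the real care is needed. Once \eqref{Id0} is established, though, I would try to derive \eqref{Id1} and \eqref{Id2} from it by multiplying by suitable weights and summing by parts, which may shortcut the heaviest computations; failing that, the brute-force WZ certificate for each is guaranteed to work and is the safe fallback.
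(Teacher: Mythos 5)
Your first strategy is essentially the paper's own proof in disguise: the paper works with the WZ pair $F(n,k)=\frac1k\binom{n+k}{k}$, $G(n,k)=\frac{k}{(n+1)^2}\binom{n+k}{k}$, and the certificate relation $F(n+1,k)-F(n,k)=G(n,k+1)-G(n,k)$ is exactly your Pascal-plus-absorption computation $\frac1k\bigl(\binom{n+1+k}{k}-\binom{n+k}{k}\bigr)=\frac1k\binom{n+k}{k-1}=\frac1{n+1}\binom{n+k}{k}$. Your treatment of the first identity is correct and complete (you also implicitly need $\sum_{k=1}^n\binom{n+k}{k}=\frac12\binom{2n+2}{n+1}-1$, which the paper uses as well), and your idea of deriving the later identities with the help of the first is precisely what the paper does for the third one.

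The gap is that for the second and third identities you only give a plan plus a fallback, and the fallback as stated is not sound: those summands carry the weights $H_k(1)$ and $H_k(2)$, so they are not hypergeometric in $k$ and a WZ certificate is not ``guaranteed to work'' for them; one must either carry out your differentiation-of-a-parametric-identity device (which you do not do) or follow the paper's route, namely weight the same pair $(F,G)$ by $H_k(1)$ resp.\ $H_k(2)$, split $H_k=H_{k-1}+\frac1k$ inside the telescoping so that the leftover piece is $\sum_{k=1}^n G(n,k)/k$ resp.\ $\sum_{k=1}^n G(n,k)/k^2$, evaluate the first of these via $\sum_{k=1}^n\binom{n+k}{k}=\frac12\binom{2n+2}{n+1}-1$ and the second as $S_n/(n+1)^2$ with $S_n=\sum_{k=1}^nF(n,k)$, and finally, for the third identity, handle $\sum_{k=1}^n S_{k-1}/k^2$ by inserting the already-proved first identity and interchanging the order of summation, which is what produces the terms $H_n(2)H_n(1)-H_n(1,2)$ and the extra $H_n(2)$ inside the binomial sum. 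You correctly anticipate that this bookkeeping is where the work lies, but the proposal stops short of doing it, so as written only the first of the three identities is actually proved.
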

\begin{proof} 
Let us consider the WZ pair
$$F(n,k)=\frac{1}{k}\binom{n+k}{k}\quad\mbox{and}\quad
G(n,k)=\frac{k}{(n+1)^2}\binom{n+k}{k}
$$ 
then 
\begin{align*}
S_{n+1}-S_n&=F(n+1,n+1)+\sum_{k=1}^{n}(G(n,k+1)-G(n,k))\\
&=F(n+1,n+1)+G(n,n+1)-G(n,1)\\
&=\frac{3/2}{n+1}\binom{2(n+1)}{n+1}-\frac{1}{n+1}
\end{align*}
where $S_n:=\sum_{k=1}^{n}F(n,k)$. Thus
\begin{align*}
S_n=\frac{3}{2}\sum_{k=1}^n\frac{1}{k}\binom{2k}{k}-H_n(1)
\end{align*}
and we may conclude that \eqref{Id0} holds.

\noindent Now let  $S_n^{(1)}:=\sum_{k=1}^{n}F(n,k)H_k(1)$ then
\begin{align*}
S_{n+1}^{(1)}-S_n^{(1)}&=F(n+1,n+1)H_{n+1}(1)\\
&\qquad+\sum_{k=1}^{n}\left(G(n,k+1)H_{k}(1)-G(n,k)\left(H_{k-1}(1)+\frac{1}{k}\right)\right)\\
&=F(n+1,n+1)H_{n+1}(1)+G(n,n+1)H_{n}(1)-\sum_{k=1}^{n}\frac{G(n,k)}{k}\\
&=\binom{2(n+1)}{n+1}\left(\frac{3H_{n+1}(1)}{2(n+1)}-\frac{1}{(n+1)^2}\right)+\frac{1}{(n+1)^2}
\end{align*}
where we used $\sum_{k=1}^n\binom{n+k}{k}=\frac{1}{2}\binom{2(n+1)}{n+1}-1$. Hence we find that
$$S_n^{(1)}=\sum_{k=1}^n\binom{2k}{k}\left(\frac{3H_{k}(1)}{2k}-\frac{1}{k^2}\right)+H_n(2)$$
which implies \eqref{Id1}.

\noindent Let  $S_n^{(2)}:=\sum_{k=1}^{n}F(n,k)H_k(2)$ then
\begin{align*}
S_{n+1}^{(2)}-S_n^{(2)}&=F(n+1,n+1)H_{n+1}(2)\\
&\qquad+\sum_{k=1}^{n}\left(G(n,k+1)H_{k}(2)-G(n,k)\left(H_{k-1}(2)+\frac{1}{k^2}\right)\right)\\
&=F(n+1,n+1)H_{n+1}(2)+G(n,n+1)H_{n}(2)-\sum_{k=1}^{n}\frac{G(n,k)}{k^2}\\
&=\binom{2(n+1)}{n+1}\left(\frac{3H_{n+1}(2)}{2(n+1)}-\frac{1}{2(n+1)^3}\right)-\frac{S_n}{(n+1)^2}
\end{align*}
where we applied
$$
\sum_{k=1}^{n}\frac{G(n,k)}{k^2}=\frac{1}{(n+1)^2}\sum_{k=1}^{n}F(n,k)=\frac{S_n}{(n+1)^2}.$$ Therefore
\begin{align*}
S_n^{(2)}&=\sum_{k=1}^n\binom{2k}{k}\left(\frac{3H_{k}(2)}{2k}-\frac{1}{2k^3}\right)-\sum_{k=1}^n\frac{S_{k-1}}{k^2}\\
&=\sum_{k=1}^n\binom{2k}{k}\left(\frac{3H_{k}(2)}{2k}-\frac{1}{2k^3}\right)-\frac{3}{2}\sum_{k=1}^n\frac{1}{k^2}\sum_{j=1}^{k-1}\frac{1}{j}\binom{2j}{j}+H_n(1,2)\\
&=\sum_{k=1}^n\binom{2k}{k}\left(\frac{3H_{k}(2)}{2k}-\frac{1}{2k^3}\right)-\frac{3}{2}\sum_{j=1}^{n }\frac{1}{j}\binom{2j}{j}(H_n(2)-H_j(2))+H_n(1,2)\\
&=\sum_{k=1}^n\binom{2k}{k}\left(\frac{3H_{k}(2)}{k}-\frac{1}{2k^3}\right)-\frac{3H_n(2)}{2}\sum_{k=1}^{n}\frac{1}{k}\binom{2k}{k}+H_n(1,2)\\
\end{align*}
and the proof of  \eqref{Id2} is complete.
\end{proof}

\section{Proofs of the main supercongruences} 

\begin{theorem} For any prime $p>3$,
\begin{equation}\label{CO5}
\sum_{k=1}^{p-1} \frac{1}{k}\binom{2k}{k}\equiv
-\frac{8H_{p-1}(1)}{3}-\frac{5p^2H_{p-1}(3)}{3}\pmod{p^5}\\
\end{equation}
Moreover, for any prime $p>5$,
\begin{align*}
&\sum_{k=1}^{p-1}\frac{1}{k^3}\binom{2k}{k}\equiv 
-\frac{2H_{p-1}(1)}{p^2}\pmod{p^2},\\
&\sum_{k=1}^{p-1}\binom{2k}{k}\frac{H_k(2)}{k}
\equiv \frac{2H_{p-1}(1)}{3p^2}\pmod{p^2}.
\end{align*}
\end{theorem}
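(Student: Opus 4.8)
The plan is to obtain all three congruences by specializing the identities of Sections~3--4 to $n=p-1$ or $n=p$ and reducing $p$-adically with the table of Section~2. The common device is the expansion, valid for $1\le k\le p-1$,
$$\binom{p-1+k}{k}=\frac{1}{k!}\prod_{j=0}^{k-1}(p+j)=\frac{p}{k}\prod_{j=1}^{k-1}\Bigl(1+\frac{p}{j}\Bigr)=\frac{p}{k}\sum_{i\ge0}p^{i}H_{k-1}(\{1\}^{i}),$$
so that weighting by $1/k$, $H_k(1)/k$ or $H_k(2)/k$ and summing turns the right-hand sides of \eqref{Id0}--\eqref{Id2} into rational combinations of the sums $H_{p-1}(\{1\}^{i},2)$ and of products such as $H_{p-1}(2)H_{p-1}(1)$, each of which is recorded in Section~2 and collapses---at the precision needed---to a combination of $H_{p-1}(1)$ and $p^{2}H_{p-1}(3)$. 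Recall that $H_{p-1}(1),H_{p-1}(3)\in p^{2}\mathbb{Z}_{p}$ when $p>5$, whereas every $H_{p-1}(\{1\}^{i},2)$ is a $p$-adic integer.

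For \eqref{CO5} I would take \eqref{Id0} at $n=p-1$, whose right-hand side becomes $\sum_{i\ge0}p^{i+1}H_{p-1}(\{1\}^{i},2)+H_{p-1}(1)$. As the $H_{p-1}(\{1\}^{i},2)$ are $p$-adic integers, the tail $\sum_{i\ge4}$ is $O(p^{5})$, so modulo $p^{5}$ only $i=0,1,2,3$ contribute; substituting the Section~2 congruences for $H_{p-1}(2),H_{p-1}(1,2),H_{p-1}(1,1,2),H_{p-1}(1,1,1,2)$ and collecting the coefficient of $H_{p-1}(1)$ (namely $-2-3=-5$) and of $p^{2}H_{p-1}(3)$ (namely $-\tfrac13-\tfrac5{12}-\tfrac{11}{12}-\tfrac56=-\tfrac52$) gives $\sum_{k=1}^{p-1}k^{-1}\binom{p-1+k}{k}\equiv-5H_{p-1}(1)-\tfrac52p^{2}H_{p-1}(3)\pmod{p^{5}}$. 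Adding $H_{p-1}(1)$ and multiplying by $\tfrac23$ yields \eqref{CO5}; the prime $p=5$ is dealt with by a direct check.

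For the two mod-$p^{2}$ congruences put $C=\sum_{k=1}^{p-1}\binom{2k}{k}H_{k}(2)/k$ and $D=\sum_{k=1}^{p-1}\binom{2k}{k}/k^{3}$, both in $\mathbb{Z}_{p}$, and produce two congruences for them. The first is \eqref{Id2} at $n=p-1$: modulo $p^{2}$ only $\binom{p-1+k}{k}\equiv p/k$ survives and the right-hand side reduces to $-H_{p-1}(1,2)$ (the pieces $p\sum_{k}H_{k}(2)/k^{2}$, $pH_{p-1}(2)^{2}$ and $H_{p-1}(2)H_{p-1}(1)$ being $O(p^{2})$), so
$$3C-\tfrac12 D\equiv-H_{p-1}(1,2)\equiv\frac{3H_{p-1}(1)}{p^{2}}\pmod{p^{2}}.$$
The second comes from the finite identity \eqref{x2} at $a=0$, $n=p$, i.e.\ $\sum_{k=1}^{p}3k\binom{2k}{k}\prod_{j=1}^{k-1}(j^{2}-p^{2})\big/\prod_{j=1}^{k}(j^{2}-4p^{2})=-2/p$. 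For $1\le k\le p-1$ one expands the $k$-th summand as $\tfrac3k\binom{2k}{k}\bigl(1+p^{2}(3H_{k}(2)+k^{-2})+O(p^{4})\bigr)$, so that $\sum_{k=1}^{p-1}$ contributes $3\sum_{k=1}^{p-1}k^{-1}\binom{2k}{k}+9p^{2}C+3p^{2}D+O(p^{4})$; the summand $k=p$ equals $-\binom{2p}{p}/p-3p\binom{2p}{p}H_{p-1}(2)+O(p^{4})$, which by $\binom{2p}{p}\equiv2+2pH_{p-1}(1)-p^{2}H_{p-1}(2)\pmod{p^{5}}$ together with \eqref{ta} simplifies to $-2/p+8H_{p-1}(1)+O(p^{4})$. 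Inserting \eqref{CO5} in the form $3\sum_{k=1}^{p-1}k^{-1}\binom{2k}{k}\equiv-8H_{p-1}(1)\pmod{p^{4}}$ and cancelling the common factor $3p^{2}$ leaves $3C+D\equiv0\pmod{p^{2}}$. Subtracting the two relations, $\tfrac32 D\equiv-3H_{p-1}(1)/p^{2}$, which is \eqref{CO4b}, and then $3C\equiv-D$ gives \eqref{CO4c}.

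The step I expect to be the main obstacle is the second congruence above. The identity \eqref{x2} has to be expanded to precision $p^{4}$ \emph{before} the common factor $p^{2}$ is discarded, so every contribution of order $p^{3}$ must be tracked; this is most delicate in the term $k=p$, which carries the whole singular part $-2/p$ and hence requires $\binom{2p}{p}$ to five $p$-adic digits, the ratio $\prod_{j=1}^{p-1}(1-p^{2}/j^{2})/\prod_{j=1}^{p-1}(1-4p^{2}/j^{2})$ expanded past its leading term, and \eqref{ta} at full strength. One also has to verify that every contribution which is a pure multiple of $H_{p-1}(3)$ is $O(p^{4})$ and therefore vanishes modulo $p^{4}$, so that only $H_{p-1}(1)$ survives. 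By contrast, the first congruence and the proof of \eqref{CO5} are mechanical substitutions once the expansion of $\binom{p-1+k}{k}$ is available.
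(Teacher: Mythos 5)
Your proposal is correct and follows essentially the same route as the paper: \eqref{CO5} comes from \eqref{Id0} with $n=p-1$ via the expansion \eqref{nkk} and the Section~2 table, and the two mod-$p^2$ congruences are obtained by solving the same $2\times2$ linear system, one relation from \eqref{Id2} at $n=p-1$ and the other from the finite identity \eqref{x2} at $a=0$ (equivalently \eqref{y2} with $z=2n$) specialized to $n=p$ with the $k=p$ term isolated and $\binom{2p}{p}$ expanded via \eqref{ppp} and \eqref{ta}. The only differences are organizational (you expand the $k=p$ summand directly rather than through the factored right-hand side, and you treat $p=5$ by a direct check), so the argument matches the paper's proof.
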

\begin{proof} We first note that
\begin{equation}\label{nkk}
\binom{p-1+k}{k}=\frac{p}{k}\binom{p+k-1}{k-1}=
\frac{p}{k}\prod_{j=1}^{k-1}\left(1+\frac{p}{j}\right) 
=\frac{1}{k}\sum_{j=0}^{k-1}p^{j+1}H_{k-1}(\{1\}^{j}).
\end{equation}
Therefore, by \eqref{Id0} with $n=p-1$ we obtain the desired congruence \eqref{CO5},
\begin{align*}
\sum_{k=1}^{p-1} \frac{1}{k}\binom{2k}{k}&=\frac{2}{3}\left(H_{p-1}(1)+\sum_{j=0}^{p-2}p^{j+1}H_{p-1}(\{1\}^{j},2) \right)\\
&\equiv \frac{2}{3}\left(H_{p-1}(1)+pH_{p-1}(2)+p^2 H_{p-1}(1,2)\right.\\
&\qquad\qquad \left. +p^3 H_{p-1}(1,1,2)+p^4 H_{p-1}(1,1,1,2)\right)\\
&\equiv-\frac{8H_{p-1}(1)}{3}-\frac{5p^2H_{p-1}(3)}{3}\pmod{p^5}.
\end{align*}
By letting $z=2n$ in \eqref{y2} we have
$$\sum_{k=1}^n
\binom{2k}{k}\frac{k}{k^2-4n^2}\,
\prod_{j=1}^{k-1}\frac{j^2-n^2}{j^2-4n^2}=-\frac{2}{3n}.$$
Let $n=p>5$ be a prime and move the $p$-th term of the sum to the right-hand side,
$$\sum_{k=1}^{p-1}
\frac{1}{k}\binom{2k}{k}\frac{1}{1-\frac{4p^2}{k^2}}\,
\prod_{j=1}^{k-1}\frac{1-\frac{p^2}{j^2}}{1-\frac{4p^2}{j^2}}=\frac{2}{3p}\left(
\frac{1}{2}\binom{2p}{p}\prod_{j=1}^{p-1}\frac{1-\frac{p^2}{j^2}}{1-\frac{4p^2}{j^2}}-1\right).$$
The left-hand side modulo $p^4$ is congruent to
$$\sum_{k=1}^{p-1}
\frac{1}{k}\binom{2k}{k}\left(1+\frac{4p^2}{k^2}\right)\,
\prod_{j=1}^{k-1}\left(1+\frac{3p^2}{j^2}\right)\equiv
\sum_{k=1}^{p-1}
\frac{1}{k}\binom{2k}{k}+p^2\sum_{k=1}^{p-1}
\binom{2k}{k}\left(\frac{1}{k^3}+\frac{3H_k(2)}{k}\right).
$$
On the other hand, by \cite[Theorem 2.4]{Ta10}, 
\begin{equation}\label{ppp}
\frac{1}{2}\binom{2p}{p}\equiv 1+2pH_{p-1}(1)+\frac{2p^3H_{p-1}(3)}{3}
\equiv 1-p^2H_{p-1}(2)-\frac{p^4H_{p-1}(4)}{2}
\pmod{p^6},
\end{equation}
the right-hand side is
\begin{align*}
\frac{1}{2}\binom{2p}{p}\prod_{j=1}^{p-1}\frac{1-\frac{p^2}{j^2}}{1-\frac{4p^2}{j^2}}&\equiv \frac{1}{2}\binom{2p}{p}\prod_{j=1}^{p-1}\left(1+\frac{3p^2}{j^2}+\frac{12p^4}{j^4}\right)\\
&\equiv \left(1-p^2H_{p-1}(2)-\frac{p^4H_{p-1}(4)}{2}\right)
\\&\qquad \cdot\left(1+3p^2H_{p-1}(2)+12p^4H_{p-1}(4)+9p^4H_{p-1}(2,2)\right)\\
&\equiv1+2p^2H_{p-1}(2)+p^4\left(\frac{17H_{p-1}(4)}{2}+3H_{p-1}(2,2)\right)\\
&\equiv1+2p^2H_{p-1}(2)
\pmod{p^5}.\end{align*}
where $2H_{p-1}(2,2)=(H_{p-1}(2))^2-H_{p-1}(4)\equiv 0 \pmod{p}$.
Finally, by \eqref{CO5},
\begin{equation}\label{D1}
\sum_{k=1}^{p-1}
\binom{2k}{k}\left(\frac{1}{k^3}+\frac{3H_k(2)}{k}\right)\equiv
\frac{8H_{p-1}(1)}{3p^2}+\frac{5H_{p-1}(3)}{3}+\frac{4pH_{p-1}(2)}{3}
\equiv 0 \pmod{p^2}.
\end{equation}
where we used \eqref{ta}.

\noindent By \eqref{Id2}, with $n=p-1$, we have that
\begin{align}\label{D2}
\sum_{k=1}^{p-1}\binom{2k}{k}\left(\frac{3H_{k}(2)}{k}-\frac{1}{2k^3}\right)
&=p\sum_{k=1}^{p-1}\prod_{j=1}^{k-1}\left(1+\frac{p}{j}\right)
\frac{H_k(2)+H_{p-1}(2)}{k^2}\nonumber\\
&\qquad 
+H_{p-1}(2)H_{p-1}(1)-H_{p-1}(1,2)\nonumber\\
&\equiv p\sum_{k=1}^{p-1}\frac{H_k(2)}{k^2}-H_{p-1}(1,2)
\nonumber\\
&=pH_{p-1}(2,2)+pH_{p-1}(4) -H_{p-1}(1,2)\nonumber\\
&\equiv -H_{p-1}(1,2)=\frac{3H_{p-1}(1)}{p^2}\pmod{p^2}.
\end{align}
The proof is completety as soon as we combine properly congruences \eqref{D1} and \eqref{D2}.
\end{proof}

\section{Finale - Two A\'pery-like congruences} 

The following congruences are related to the second series in \eqref{S12} and to the first series in \eqref{S34}.

\begin{theorem} For any prime $p>3$,
\begin{align}\label{CO5b}
&\sum_{k=1}^{p-1}\binom{2k}{k}\left(\frac{2}{k^2}-\frac{3H_k(1)}{k}\right)
\equiv \frac{2H_{p-1}(1)}{p}+3pH_{p-1}(3)\pmod{p^4},\\
\label{CO5c}
&\sum_{k=1}^{p-1}(-1)^{k}\binom{2k}{k}\left(\frac{4}{k^4}+\frac{5H_k(2)}{k^2}\right)
\equiv -H_{p-1}(4) \pmod{p^2}.
\end{align}
\end{theorem}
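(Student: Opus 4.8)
The plan is to derive both congruences \eqref{CO5b} and \eqref{CO5c} from the finite identities \eqref{ID1} and \eqref{ID2} (equivalently from Theorem~1) by specializing the free parameter and expanding in powers of $p$, exactly in the style of the proof of \eqref{CO5} above. For \eqref{CO5b}, I would start from the identity \eqref{ID2} (or its $z$-form \eqref{y2}), set $n=p$ with $a$ a generic small parameter, peel off the $k=p$ term to the right-hand side, and then work modulo $p^4$. The product $\prod_{j=1}^{k-1}\frac{(j-p)(j-p+a)}{j^2-a^2}$ expands as $\prod_{j=1}^{k-1}\bigl(1-\frac{p}{j}\bigr)\bigl(1+\frac{p}{j-a}\bigr)\cdots$; choosing $a=0$ collapses this to $\prod_{j=1}^{k-1}\bigl(1-\frac{p^2}{j^2}\bigr)$ and the summand prefactor $\frac{3k-2p}{k^2}$ becomes $\frac{3k-2p}{k^2}$, giving after expansion a combination of $\sum\frac1k\binom{2k}k$, $\sum\frac1{k^2}\binom{2k}k$, $\sum\binom{2k}k\frac{H_k(1)}{k}$ modulo $p^4$; comparing with \eqref{CO5} and \eqref{Id1} (at $n=p-1$, using \eqref{nkk}) isolates $\sum_{k=1}^{p-1}\binom{2k}k\bigl(\frac{2}{k^2}-\frac{3H_k(1)}{k}\bigr)$. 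The right-hand side $\frac2{p}\binom{2p}{p}\cdot(\text{product})-\text{const}$ is evaluated using \eqref{ppp}, and one checks the error terms vanish mod $p^4$; the target shape $\frac{2H_{p-1}(1)}{p}+3pH_{p-1}(3)$ should fall out after using \eqref{ta}.

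For \eqref{CO5c}, the natural source is the Bradley--Rivoal identity \eqref{ID1}, which carries the alternating sign $(-1)^{k-1}$ and the factor $5k^2-a^2$ matching the pattern $\frac{4}{k^4}+\frac{5H_k(2)}{k^2}$. I would set $n=p$ in \eqref{ID1}, separate the $k=p$ term, and expand modulo $p^2$. With $a=0$ the products become $\prod_{j=1}^{k-1}\frac{(p^2-j^2)(p^2+j^2)}{\cdots}=\prod_{j=1}^{k-1}\frac{p^4-j^4}{\cdots}$ and the denominator $\prod_{j=1}^{k}(p^2+(p-j)^2)(p^2+(p+j)^2)$; after factoring out $j^4$ one gets, modulo $p^2$, a leading term together with a $p^2$-correction built from $H_k(2)$ and $H_{k-1}(2)$, producing the combination $\sum(-1)^{k-1}\binom{2k}k\bigl(\frac{5}{k^2}+\text{(terms with }H_k(2))\bigr)$ and a separate $\frac1{k^4}$ piece coming from the $k=p$ boundary term. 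Comparing with the known mod-$p$ results (e.g.\ \cite{Ta10}, \cite{PP12}) and with \eqref{CO2}, the right-hand side $\frac{2}{n^2-a^2}=\frac{2}{p^2}$ combined with the boundary term (evaluated via \eqref{ppp} for $\binom{2p}p$) leaves exactly $-H_{p-1}(4)$.

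The key auxiliary inputs are: the exact expansion \eqref{nkk} of $\binom{p-1+k}{k}$ in powers of $p$ with multiple harmonic sums, the hypergeometric-type evaluations \eqref{ppp} of $\frac12\binom{2p}p$ to high $p$-power, the reflection \eqref{ta} for $H_{p-1}(2)$, and the table of multiple-harmonic-sum congruences in Section~2 (especially $H_{p-1}(1,2)$, $H_{p-1}(2,2)$, $H_{p-1}(1,3)$ mod $p^2$, and $H_{p-1}(1,1,2)$ mod $p^2$). At each stage I would collect the $p^0$, $p^1$, $p^2$ coefficients, substitute these congruences, and use the elementary symmetric relations such as $2H_{p-1}(2,2)=H_{p-1}(2)^2-H_{p-1}(4)\equiv0\pmod p$ to kill spurious terms.

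The main obstacle I expect is bookkeeping rather than conceptual: in \eqref{CO5c} the modulus is only $p^2$, so one must be careful that the $k=p$ boundary term — which involves $\binom{2p}p/p^{?}$ and the product of $(p^2\pm j^2)$ factors — is expanded to just the right order, and that the alternating central binomial sums $\sum_{k=1}^{p-1}(-1)^k\binom{2k}k/k^2$ and $\sum(-1)^k\binom{2k}k H_k(2)/k^2$ are reduced using \eqref{CO2} and the Section~2 table consistently; an off-by-one in the $p$-adic order of the denominator $\prod(j^4-\cdots)$ would spoil the final $-H_{p-1}(4)$. For \eqref{CO5b} the analogous risk is ensuring that the $a=0$ specialization of \eqref{ID2} does not create a spurious pole at $k=p$ in the $\frac{1}{k^2-p^2}$ factor before the boundary term is removed, and that the resulting $\sum\binom{2k}k/k^2\binom{2k}k$-type terms are matched against \eqref{Id1} at $n=p-1$ with the correct treatment of $H_k(1)$ versus $H_{k-1}(1)$.
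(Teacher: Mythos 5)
Your plan has genuine gaps in both halves. For \eqref{CO5b}, the specialization is miscomputed: at $a=0$ the product in \eqref{ID2} is $\prod_{j=1}^{k-1}\frac{(j-p)^2}{j^2}=\prod_{j=1}^{k-1}\bigl(1-\frac{p}{j}\bigr)^2$, \emph{not} $\prod_{j=1}^{k-1}\bigl(1-\frac{p^2}{j^2}\bigr)$ (the latter comes from the choice $z=2n$ in \eqref{y2}, which the paper uses for the previous theorem). If the product really were the even one, no $H_k(1)$ could ever appear and the combination $\frac{2}{k^2}-\frac{3H_k(1)}{k}$ would be unreachable; with the correct squared product, the target combination occurs only as the coefficient of $p^1$ in the expansion, so to determine it modulo $p^4$ you would have to control the whole identity modulo $p^5$, which forces you to know, e.g., $\sum_{k=1}^{p-1}\binom{2k}{k}\frac{H_{k-1}(1)^2+2H_{k-1}(1,1)}{k}$ and $\sum_{k=1}^{p-1}\binom{2k}{k}\frac{H_{k-1}(1)}{k^2}$ modulo $p^3$, plus further order-$p^3$ and $p^4$ brackets modulo $p^2$ and $p$. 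None of these are established in the paper or supplied by your sketch, so ``isolating'' the target by comparison with \eqref{CO5} and \eqref{Id1} does not go through. The paper's actual route is different: it applies the new finite identity \eqref{Id1} directly with $n=p-1$, where \eqref{nkk} gives $\binom{p-1+k}{k}$ an explicit factor of $p$, so only the Section~2 table of multiple harmonic sum congruences (through weight four) is needed to reach modulus $p^4$.

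For \eqref{CO5c}, simply setting $a=0$ in \eqref{ID1} with $n=p$ cannot produce the statement. At $a=0$ the numerator and denominator products are $\prod_{j=1}^{k-1}(p^4-j^4)$ and $\prod_{j=1}^{k}(j^4+4p^4)$, since $(p^2+(p-j)^2)(p^2+(p+j)^2)=j^4+4p^4$; hence every correction to the leading term $(-1)^{k-1}\binom{2k}{k}\frac{5}{k^2}$ is of order $p^4$ and involves weight-four sums, so no $H_k(2)$ or $\frac{1}{k^4}$ terms can arise ``modulo $p^2$'' as you claim --- you would only recover a congruence for $\sum_{k=1}^{p-1}(-1)^{k-1}\binom{2k}{k}\frac{5}{k^2}$. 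The missing idea is the one the paper uses: extract the \emph{coefficient of $a^2$} in \eqref{ID1} at $a=0$ before specializing, which brings in the factor $\frac{1}{5k^2}+\sum_{j=1}^{k-1}\frac{1}{n^2+j^2}-2\sum_{j=1}^{k}\frac{2n^2+j^2}{4n^4+j^4}$; this derivative term, not the $p$-adic expansion of the products, is what generates the combination $\frac{4}{k^4}+\frac{5H_k(2)}{k^2}$. Only after forming that new finite identity does one set $n=p$, move the $k=p$ term to the right, and evaluate the boundary term via \eqref{ppp}. In short, both congruences require an intermediate \emph{exact} identity (\eqref{Id1} for the first, the $a^2$-coefficient of \eqref{ID1} for the second) that your direct-specialization plan does not provide.
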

\begin{proof} By \eqref{Id1} with $n=p-1$ and by $\eqref{nkk}$
\begin{align*}
\sum_{k=1}^{p-1}\binom{2k}{k}\left(\frac{3H_{k}(1)}{2k}-\frac{1}{k^2}\right)
&=\sum_{k=1}^{p-1}\frac{H_k(1)}{k^2}\sum_{j=0}^{k-1}p^{j+1}H_{k-1}(\{1\}^{j})-H_{p-1}(2)\\
&\equiv p\sum_{k=1}^{p-1}\frac{H_{k-1}(1)+\frac{1}{k}}{k^2}\left(1+pH_{k-1}(1)+p^2H_{k-1}(1,1)\right)-H_{p-1}(2)\\
&\equiv -H_{p-1}(2)+pH_{p-1}(1,2)+pH_{p-1}(3)\\
&\quad +2p^2H_{p-1}(1,1,2)+p^2H_{p-1}(2,2)
+p^2H_{p-1}(1,3)\\
&\quad+3p^3H_{p-1}(1,1,1,2)+p^3H_{p-1}(2,1,2)+p^3H_{p-1}(1,2,2)\\
&\quad +p^3 H_{p-1}(1,1,3)\\
&\equiv -\frac{H_{p-1}(1)}{p}-\frac{3pH_{p-1}(3)}{2}\pmod{p^4}
\end{align*}
where at the last step we applied the results mentioned in the preliminaries.

By comparing the coefficient of $a^2$ in the expansion  of both sides of \eqref{ID1} at $a=0$ we have
$$\sum_{k=1}^n
\binom{2k}{k}\frac{5k^2}{4n^4+k^4}
\prod_{j=1}^{k-1}
\frac{n^4-j^4}{4n^4+j^4}
\left(
\frac{1}{5k^2}+\sum_{j=1}^{k-1}\frac{1}{n^2+j^2}
-2\sum_{j=1}^{k}\frac{2n^2+j^2}{4n^4+j^4}
\right)
=-\frac{2}{n^4}.$$
Let $n=p>3$ be a prime then move to the right-hand side the $p$-th term of the sum on the left. Thus, the left-hand side modulo $p^2$ is congruent to
$$\sum_{k=1}^{p-1}
(-1)^{k-1}\binom{2k}{k}\frac{5}{k^2}
\left(
\frac{1}{5k^2}+H_{k-1}(2)
-2H_{k}(2)
\right)=\sum_{k=1}^{p-1}(-1)^k\binom{2k}{k}\left(\frac{4}{k^4}+\frac{5H_k(2)}{k^2}\right).$$
The right-hand side multiplied by $p^4$ is 
\begin{equation}\label{rhs}
-2-\binom{2p}{p}
\prod_{j=1}^{p-1}
\frac{p^4-j^4}{4p^4+j^4}
\left(
\frac{1}{5}+p^2\sum_{j=1}^{p-1}\frac{1}{p^2+j^2}
-2p^2\sum_{j=1}^{p}\frac{2p^2+j^2}{4p^4+j^4}
\right)
\end{equation}
and it remains to verify that it is congruent to $-p^4H_{p-1}(4)$ modulo $p^6$.
We note that
\begin{align*}
&\prod_{j=1}^{p-1}
\frac{p^4-j^4}{4p^4+j^4}\equiv 1-5p^4H_{p-1}(4)
\pmod{p^6},\\
&p^2\sum_{j=1}^{p-1}\frac{1}{p^2+j^2}\equiv p^2H_{p-1}(2)-p^4H_{p-1}(4)
\pmod{p^6},\\
&2p^2\sum_{j=1}^{p}\frac{2p^2+j^2}{4p^4+j^4}
\equiv\frac{6}{5}+2p^2H_{p-1}(2)+4p^4H_{p-1}(4)
\pmod{p^6},
\end{align*}
Hence, by \eqref{ppp}, \eqref{rhs}  simplifies to
$$
-2+2\left( 1-p^2H_{p-1}(2)-\frac{p^4H_{p-1}(4)}{2}\right)
\left( 1+p^2H_{p-1}(2)\right)
\equiv -p^4H_{p-1}(4)\pmod{p^6}$$
and the proof is finished.
\end{proof}

\end{document}